\newtheorem{theorem}{Theorem}
\newtheorem{lemma}{Lemma}
\newtheorem*{fekete}{Fekete's Lemma}
\theoremstyle{definition}
\newtheorem*{definition}{Definition}
\newtheorem*{definitions}{Definitions}
\newtheorem*{exercise}{Exercise}
\newtheorem*{example}{Example}
\newcommand{\N}{\mathbb{N}}
\newcommand{\Al}{\mathscr{A}}
\newcommand{\Bl}{\mathscr{B}}
\newcommand{\empw}{\varepsilon}
\newcommand{\F}{\mathscr{F}}
\begin{document}

\title{Entropy of subordinate shift spaces}
\markright{Entropy of subordinate subshifts}
\author{Marcin Kulczycki, Dominik Kwietniak, and Jian Li}

\maketitle

\begin{abstract}
We introduce a new family of shift spaces --- the subordinate shifts. Using subordinate shifts we prove in an elementary way that for every nonnegative real number $t$ there is a shift space with entropy $t$.
\end{abstract}

\section{Introduction.}

Let $\Al$ be a finite collection of \emph{symbols} which we call an \emph{alphabet}. Typical choices are $\Al=\{0,1,\ldots,r-1\}$ for some integer $r$ or
a set of Roman letters, for example $\Al=\{a,b,c,d\}$. The \emph{full shift} over $\Al$ is the set $\Al^\N$ of all infinite sequences of symbols. A \emph{block} is any finite sequence of symbols.

A \emph{shift space} over $\Al$ is a subset of $\Al^\N$ defined by establishing some constraints on blocks, which are allowed to appear as subsequences.
For example, the set of all infinite sequences of $0$'s and $1$'s that do not contain the block $11$ is a shift space over $\{0,1\}$. A shift space becomes a dynamical system when we equip it with the \emph{shift map}, which discards the first element of a sequence and shifts the remaining elements by one place to the left. The branch of mathematics concerning the study of shift spaces is known as \emph{symbolic dynamics}.

Shift spaces are mathematical models for digitized information, arising frequently as a result of the discretization of dynamical processes. For example, imagine a point moving along a trajectory in space. Partition the space into finitely many pieces and assign a symbol to each piece. Now write down the sequence of symbols labelling the successive partition elements visited by the point as it follows the trajectory. We have encoded information about the trajectory in a sequence from the shift space over the set of labels of the partition cells.

As is often the case in mathematics, there is a notion of isomorphism for shift spaces known as \emph{conjugacy} showing
us when two seemingly different shift spaces are practically the same. In other words, conjugate shift spaces can be treated as two instances of the same underlying object (for more details we refer the reader to \cite[Def. 1.5.9]{LM95}).
For example, there is neither loss nor gain of information when the full $\{0,1\}$-shift is transformed into the full $\{4,7\}$-shift by switching every $0$ to $4$ and every $1$ to $7$.

A common problem in symbolic dynamics is  to decide whether two shift spaces are conjugate or not. Various invariants are devised to help us distinguish shift spaces which are \emph{not} conjugate. A property of a shift space is a \emph{conjugacy invariant} if whenever a shift space $X$ possesses that property, then every shift space conjugate to $X$ also possesses that property. Among the most important invariants is \emph{entropy}, which is a measure of the complexity of a shift space. Entropy is a nonnegative real number equal to the asymptotic growth rate of the number of blocks that occur in a shift space.

Entropy is a conjugacy invariant \cite[Cor. 4.1.10]{LM95}. Therefore, if two shift spaces have different entropy, then they have different structure  --- they are not conjugate. For example, the full shift over $\{0,1\}$ is not conjugate to the full  shift over $\{a,b,c\}$, as they have different entropies.

There is one problem with this useful tool: computing the entropy of an arbitrary shift space is often a difficult or even a hopeless task. For example, it is known that for every nonnegative real number $t$ there is a shift space with entropy $t$, but this result is a part of nontrivial theory (see for example  \cite[pp. 178--9]{Walters}).

We will show, however, that this can be proven in an elementary way. To this end, we will define a new class of shift spaces --- the subordinate shifts --- for some of which the calculation of entropy is straightforward and requires only basic combinatorics.

\subsection{Notation and definitions.}

Throughout this paper, the symbol $\N$ denotes the set of \textit{positive} integers. We denote the number of elements of a finite set $A$ by $|A|$. Given any real number $x$, by $\lfloor x\rfloor$ we mean the largest integer not greater than $x$.

Recall that a sequence of real numbers $\{a_n\}_{n=1}^\infty$ is \textit{subadditive} if
$a_{m+n}\leq a_m+a_n$ for all $m,n\in\N$.

\begin{fekete}
Let $\{a_n\}_{n=1}^\infty$ be a subadditive sequence of nonnegative real numbers. Then the sequence $\{a_n/n\}_{n=1}^\infty$ converges to a limit equal to the infimum of the terms of this sequence, that is
\[
\lim_{n\to\infty}\frac{a_n}{n}  = \inf_{n\in\N}\frac{a_n}{n}.
\]
\end{fekete}

We follow the notation of Lind and Marcus~\cite{LM95} as close as possible. One notable exception is that we consider only one-sided shifts, while Lind and Marcus consider two-sided (invertible) shifts throughout most of their book.

\begin{definitions}[Full shifts]
Let $\Al$ be a finite set, which we call the  \emph{alphabet}. We refer to the elements of $\Al$ as \emph{symbols}. The \emph{full $\Al$-shift} is the collection of all infinite sequences of symbols from $\Al$. The full $\Al$-shift is denoted by
\[\Al^\N=\{x=(x_i)_{i=1}^\infty : x_i\in\Al\text{ for all } i\in\N\}.\]
We usually write an element of $\Al^\N$ as $x=(x_i)_{i=1}^\infty=x_1x_2x_3\ldots$.
Often we identify a finite set $\Al$ such that $|\Al|=r$ with $\{0,1,\ldots,r-1\}$. A \emph{full $r$-shift} is then the full shift over the alphabet $\{0,1,\ldots,r-1\}$ and a \emph{full binary shift} is the full $2$-shift.
\end{definitions}

\begin{definitions}[Blocks]
A \emph{block over $\Al$} is a finite sequence of symbols from $\Al$. We write blocks without separating their symbols, so a block over $\Al=\{0,1,2\}$ might look like $01220120$. The \emph{length of a block $u$} is the number of symbols it contains. An \emph{$n$-block} stands for a block of length $n$. We identify a symbol with the $1$-block consisting of this symbol. An \emph{empty block} is the unique block with no symbols and length zero that we denote $\empw$. The set of all blocks over $\Al$ (including $\empw$) is denoted by $\Al^*$.

A \emph{concatenation} of two blocks $u=a_1\ldots a_k$ and $v=b_1\ldots b_l$ is the block $uv$ obtained by writing $u$
first and then $v$, that is, $uv=a_1\ldots a_k b_1\ldots b_l$. The concatenation is an associative operation, because $(uv)w=u(vw)$ for any blocks $u,v,w\in\Al^*$. For this reason we may write $uvw$, or indeed concatenate any sequence of blocks (finite or not) without ambiguity. If $n\ge 1$, then $u^n$ stands for the concatenation of $n$ copies of $u$. Given a nonempty block $u\in\Al^*$ we denote by $u^\infty$ the sequence $uuu\ldots\in\Al^\N$.

Let $x=(x_i)_{i=1}^\infty\in\Al^\N$ and let $1\le i \le j$ be integers. We write $x_{[i,j]}=x_ix_{i+1}\ldots x_j$ for the  block of symbols in $x$ starting from the $i$-th and ending at the $j$-th position. We say that a block $w\in A^*$ \emph{occurs in $x$} and $x$ \emph{contains} $w$ if $w=x_{[i,j]}$ for some integers $1\le i \le j$. Note that $\empw$ occurs in every sequence from $\Al^\N$. Similarly, given an $n$-block $w=w_1\ldots w_n \in\Al^*$ we define $w_{[i,j]}=w_iw_{i+1}\ldots w_j\in\Al^*$ for each $1\le i\le j\le n$.
\end{definitions}

A \emph{prefix} of a block $z\in\Al^*$ is a block $u$ such that $z=uv$ for some $v\in\Al^*$.

\begin{definition}[Shift map]
For every $\Al$ we define the \emph{shift map} $\sigma\colon\Al^\N\rightarrow\Al^\N$. It maps a sequence $x=(x_i)_{i=1}^\infty$ to the sequence $\sigma(x)=(x_{i+1})_{i=1}^\infty$. Equivalently, $\sigma(x)$ is the sequence obtained by dropping the first symbol of $x$ and moving the remaining symbols by one position to the left.
\end{definition}

\begin{definitions}[Shift spaces]
Given any collection $\F$ of blocks over $\Al$ (i.e., a subset of $\Al^*$) we define a \emph{shift space specified by $\F$}, denoted by $X_\F$, as the set of all sequences from $\Al^\N$ which do not contain any blocks from $\F$. We say that $\F$ is a collection of \emph{forbidden blocks for $X_\F$} as blocks from $\F$ are forbidden to occur in $X_\F$.

A \emph{shift space} is a set $X\subset\Al^\N$ such that $X=X_\F$ for some $\F\subset\Al^*$. A \emph{binary shift space} is a shift space over the alphabet $\{0,1\}$.
\end{definitions}
\begin{exercise}
Show that for every shift space $X$ we have $\sigma(X)\subset X$. Find a shift space $X$ for which $\sigma(X)\neq X$.
\end{exercise}
\begin{definition}[Language of a shift space]
With each shift space $X$ over $\Al$, we may associate a set of blocks over $\Al$ which occur in some sequence $x\in X$. We call this set the \emph{language of $X$} and denote it by $\Bl(X)$. We write $\Bl_n(X)$ for the set of all $n$-blocks contained in $\Bl(X)$.
\end{definition}

\begin{exercise}
Show that if $\mathcal{L}$ is a language of some shift space over $\Al$, then $\mathcal{L}$ is:
\begin{enumerate}
\item \emph{factorial}, meaning that if $u\in\mathcal{L}$ and $u=vw$ for some blocks $v,w\in \Al^*$, then both $v$ and $w$ also belong to $\mathcal{L}$,
\item \emph{prolongable}, meaning that for every block $u$ in $\mathcal{L}$ there is a symbol $a\in \Al$ such that $ua$ also belongs to $\mathcal{L}$.
\end{enumerate}
\end{exercise}
Actually, the converse is also true. Given a factorial and prolongable subset $\mathcal{L}\subset\Al^*$ there is a shift space $X$ such that $\mathcal{L}$ is the language of $X$. A collection of forbidden blocks defining $X$ is $\F=\Al^*\setminus\mathcal{L}$.

We can also characterize points in a shift space $X$.

\begin{lemma}\label{lem:char}
Let $\mathcal{L}\subset\Al^*$ be factorial and prolongable. Let $X$ be a shift space such that $\mathcal{L}=\Bl(X)$.
Then a point $x\in \Al^\N$ is in $X$ if and only if $x_{[i,j]}\in\mathcal{L}$ for all $i,j\in\N$ with $ i < j$.
\end{lemma}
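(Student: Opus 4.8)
The plan is to prove the two implications separately, treating the equality $\mathcal{L}=\Bl(X)$ and the defining representation $X=X_\F$ of a shift space as the two facts that drive the argument. The forward implication is essentially definitional, while the reverse implication is most naturally handled by contradiction using a collection of forbidden blocks $\F$.

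For the forward direction, suppose $x\in X$. For any $i<j$ the block $x_{[i,j]}$ occurs in the point $x\in X$, so by the definition of the language it lies in $\Bl(X)=\mathcal{L}$. This requires nothing beyond unwinding the definition of $\Bl(X)$.

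For the reverse direction, I would fix $\F\subset\Al^*$ with $X=X_\F$, which exists because $X$ is a shift space, and argue by contradiction. Assume every block $x_{[i,j]}$ with $i<j$ lies in $\mathcal{L}$ but $x\notin X=X_\F$. Then $x$ contains some forbidden block $f\in\F$, say $f=x_{[i,j]}$ with $i\le j$. If $j>i$ then $f=x_{[i,j]}\in\mathcal{L}$ directly; if $j=i$, so that $f$ is a single symbol, I would instead invoke the length-two block $x_{[i,i+1]}\in\mathcal{L}$ (available because $x$ is an infinite sequence) together with the factoriality of $\mathcal{L}$ to conclude that its subblock $f=x_i$ also lies in $\mathcal{L}$. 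In either case $f\in\mathcal{L}=\Bl(X)$, so $f$ occurs in some $y\in X=X_\F$; but $f\in\F$ is forbidden, contradicting $y\in X_\F$. Hence $x\in X$.

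The only real subtlety is the single-symbol case in the reverse direction: because the hypothesis ranges only over $i<j$, a forbidden block of length one is not captured directly, and one must pass to an enclosing length-two subblock and use factoriality to recover it. This is exactly the point where the assumption that $\mathcal{L}$ is factorial is used; prolongability, by contrast, plays no role here. I expect no genuine obstacle beyond carefully recording this reduction, since everything else follows immediately from the definitions of $\Bl(X)$ and $X_\F$.
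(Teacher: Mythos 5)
Your proof is correct, and there is in fact nothing in the paper to measure it against: Lemma~\ref{lem:char} is stated there without proof, left as an immediate consequence of the definitions, so your argument fills in what the authors delegated to the reader. The forward implication is purely definitional, as you say, and in the reverse implication you correctly isolate the only real subtlety: the hypothesis quantifies only over blocks $x_{[i,j]}$ with $i<j$, so a forbidden block of length one is not covered directly and must be recovered from the length-two block $x_{[i,i+1]}$ via factoriality of $\mathcal{L}$; prolongability indeed plays no role. Two minor remarks. First, the paper's surrounding discussion (shift spaces are determined by their languages, and a factorial, prolongable $\mathcal{L}$ is the language of the shift space with forbidden set $\Al^*\setminus\mathcal{L}$) suggests the slightly more canonical route of taking $\F=\Al^*\setminus\mathcal{L}$, for which ``no block of $\F$ occurs in $x$'' literally becomes ``every block occurring in $x$ lies in $\mathcal{L}$''; your version, which works with an arbitrary $\F$ satisfying $X=X_\F$ and derives a contradiction from $f\in\F\cap\Bl(X)$, is the more self-contained of the two, since it relies only on the definitions of $X_\F$ and $\Bl(X)$ rather than on those (also unproved) structural facts. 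Second, a pedantic point: the paper allows $\empw\in\Al^*$, hence possibly $\empw\in\F$, and an empty forbidden block cannot be written as $x_{[i,j]}$, so it escapes your case analysis; but in that degenerate case $X_\F=\emptyset$, hence $\mathcal{L}=\Bl(X)=\emptyset$, and the hypothesis $x_{[1,2]}\in\mathcal{L}$ of the reverse implication is already false, so one sentence dispatches it.
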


Shift spaces are determined by their language. In other words, two shift spaces are equal if and only if they have the same language \cite[Proposition 1.3.4]{LM95}. Hence there is a one-to-one correspondence between shift spaces over $\Al$ and factorial, prolongable subsets of $\Al^*$. To define a shift space, one can either specify its set of forbidden blocks or its language.

\begin{example}
Let $x\in\Al^\N$. Let $\Bl_x$ be the collection of all blocks occurring in $x$. Since $\Bl_x$ is a factorial and prolongable language, it defines a shift space, which we denote by $\Sigma_x$.
\end{example}

\begin{example}
Let $S\subset\N\cup\{0\}$. We define
$$\F_S=\{1\underbrace{0\ldots 0}_p1:p\notin S\}.$$
The binary shift defined by forbidding blocks from $\F_S$ is called the \emph{$S$-gap shift} and is denoted by $X_S$. In particular, we call $X_\N$ the \emph{golden mean shift} (a sequence belongs to it if and only if it does not contain the block $11$).
\end{example}

The following fact merely states that we may construct inductively an \emph{infinite} sequence starting from an infinite collection of \emph{finite} sequences such that each sequence in it coincides with any shorter one as long as both are defined.

\begin{lemma}\label{lem:growing-words}
Let $\{w^{(n)}\}_{n=1}^\infty$ be a sequence in $\Al^*$ and for each $n\in\N$ let $l(n)$ be the length of $w^{(n)}$. If for each $k\in\N$ the block $w^{(k)}$ is a prefix of $w^{(k+1)}$, then there is a point $x\in\Al^\N$ such that for each $n\in\N$ we have $x_{[1,l(n)]}=w^{(n)}$. Moreover, if $\lim_{n\rightarrow\infty} l(n)=\infty$, then $x$ is unique.
\end{lemma}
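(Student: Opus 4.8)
The plan is to build the point $x$ one coordinate at a time, reading off each symbol from whichever block $w^{(n)}$ is long enough to contain that position, and then to argue that the prefix hypothesis makes this reading unambiguous.

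First I would record the elementary consequences of the prefix hypothesis. If $w^{(k)}$ is a prefix of $w^{(k+1)}$, then in particular $l(k)\le l(k+1)$, so the sequence $\{l(n)\}_{n=1}^\infty$ is nondecreasing. Moreover, a straightforward induction on $n-m$ shows that the prefix relation propagates along the sequence: whenever $m\le n$, the block $w^{(m)}$ is a prefix of $w^{(n)}$. This transitivity is the crux of the well-definedness argument below.

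Next, for each position $i\in\N$ that is \emph{covered} by some block --- that is, for which there exists $n$ with $l(n)\ge i$ --- I would set $x_i$ equal to the $i$-th symbol of $w^{(n)}$. To see that this does not depend on the chosen $n$, suppose $l(m)\ge i$ and $l(n)\ge i$ with, say, $m\le n$; then $w^{(m)}$ is a prefix of $w^{(n)}$, so their first $l(m)$ symbols agree, and in particular their $i$-th symbols coincide. For positions $i$ not covered by any block (which can only happen when the lengths $l(n)$ stay bounded) I would simply assign an arbitrary fixed symbol $a\in\Al$; since $\Al$ is a nonempty finite alphabet, such a symbol exists. This defines a point $x\in\Al^\N$, and by construction, for each $n$ the initial segment $x_{[1,l(n)]}$ reads off exactly the symbols of $w^{(n)}$, so $x_{[1,l(n)]}=w^{(n)}$.

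Finally, for the uniqueness claim under the extra hypothesis $\lim_{n\to\infty}l(n)=\infty$, I would observe that in this case every position $i$ is covered, since $l(n)\ge i$ for all sufficiently large $n$. Hence no arbitrary choices are made and every coordinate $x_i$ is forced by the requirement $x_{[1,l(n)]}=w^{(n)}$, so $x$ is uniquely determined. I do not expect any genuine obstacle here; the only subtlety --- and the precise reason uniqueness can fail when the lengths remain bounded --- is this distinction between covered and uncovered positions, while everything else is routine bookkeeping.
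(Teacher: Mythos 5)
Your proof is correct. The paper states this lemma without any proof at all, treating it as a routine fact (the remark preceding it merely gestures at the inductive construction); your coordinate-wise argument --- defining $x_i$ from any block $w^{(n)}$ with $l(n)\ge i$, using transitivity of the prefix relation for well-definedness, and filling uncovered positions with an arbitrary symbol --- is exactly that intended routine argument, and your covered/uncovered distinction correctly isolates why uniqueness requires the hypothesis $\lim_{n\to\infty}l(n)=\infty$.
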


We use superscripts in brackets as above to denote indices for sequences of blocks. That is, we write $\{w^{(n)}\}_{n=1}^\infty$ to denote a sequence of blocks. This way we may reserve subscripts for enumerating symbols within the block: $w^{(n)}=w^{(n)}_1w^{(n)}_2\ldots w^{(n)}_k$.

Finally, we are ready to move to the entropy itself. In full generality, this concept was defined by Adler, Konheim and McAndrew \cite{AKM} for an arbitrary compact topological space $X$ and a continuous map $f\colon X\to X$. The definition below applies only to shift spaces but the resulting number is equal to the Adler, Konheim and McAndrew entropy of the shift treated as a dynamical system (see \cite[Exercise 6.3.8]{LM95}).

\begin{definition}[Entropy]
Let $\log$ denote the logarithm to base $2$ (choosing a different base would also yield a valid definition; it would change the value of entropy only by a multiplicative constant). Let $X\subset\Al^\N$ be a nonempty shift space and let $m,n\in\N$. Observe that every block $w\in\Bl_{m+n}(X)$ can be written in a unique way as a concatenation $w=uv$, where $u\in \Bl_m(X)$ and $v\in \Bl_n(X)$. Therefore $|\Bl_{m+n}(X)|\le|\Bl_{m}(X)|\cdot |\Bl_{n}(X)|$, and hence
\[
\log |\Bl_{m+n}(X)|\le \log|\Bl_{m}(X)|+\log |\Bl_{n}(X)|.
\]
By applying Fekete's Lemma to the nonnegative sequence $\log|\Bl_n(X)|$ we may now define the \emph{entropy of $X$}, denoted by $h(X)$, as
\[
h(X)=\lim_{n\to\infty} \frac{1}{n}\log |\Bl_n(X)|=\inf_{n\ge 1} \frac{1}{n}\log |\Bl_n(X) |.
\]
\end{definition}

Roughly speaking, the entropy measures the complexity of a shift space $X$ in terms of the asymptotic growth rate of the number of $n$-blocks that appear in the language $X$. In other words, the number of $n$-blocks in a shift space of entropy $h\ge 0$ roughly equals $2^{nh}$.

\begin{example}
Every finite shift space has entropy zero.
\end{example}

\begin{example}
The full $\Al$-shift has entropy $\log|\Al|$. In particular, the full $r$-shift has entropy $\log r$.
\end{example}

Observe that if $X,Y\subset\Al^\N$ and $X\subset Y$, then $h(X)\leq h(Y)$. Since a shift space over $\Al$ is a subset of the full $\Al$-shift, we may conclude that the entropy of any shift space over $\Al$ is a nonnegative real number bounded above by $\log|\Al|$ (for systems that are not shifts it may well be infinite --- see \cite[Example 4.2.6]{ALM}).

As mentioned in the introduction, computing the entropy of a shift space is a hard problem --- try, for example to verify straight from the definition that the entropy of the golden mean shift is $\log((1+\sqrt{5})/2)$. There are (relatively rare) families of shift spaces (e.g. shifts of finite type, see \cite{LM95}) for which we can actually provide a (theoretically) computable formula for entropy. Even in these special cases, one needs to apply some non-trivial tools. For the sake of illustration we recall some results from \cite[Exercise 4.3.7]{LM95}.

\begin{theorem}
Let $S\subset\N\cup\{0\}$ and let $X_S$ be the associated $S$-gap shift. Then $h(X_S)=\log\lambda$, where $\lambda$ is the unique positive solution of the equation
\[
\sum_{j\in S} x^{-j-1}=1.
\]
\end{theorem}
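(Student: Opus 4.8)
The plan is to reduce the computation of $h(X_S)$ to counting compositions of integers into a prescribed set of parts, and then to read off the exponential growth rate of these counts by an elementary power-series argument. Throughout I assume $S\neq\emptyset$; if $S=\emptyset$ every block $10^p1$ is forbidden, so points of $X_S$ contain at most one symbol $1$, whence $h(X_S)=0$ and the displayed equation has no positive solution, so that case is set aside. I set $D=\{\,j+1:j\in S\,\}$ and, for $m\ge 0$, let $a_m$ be the number of finite tuples $(j_1,\dots,j_k)$ with every $j_i\in S$ and $\sum_{i=1}^k(j_i+1)=m$; equivalently $a_m$ is the number of blocks $10^{j_1}10^{j_2}\cdots 10^{j_k}$ of length $m$ with all $j_i\in S$, with the convention $a_0=1$.

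First I would show that $\Bl_n(X_S)$ consists of exactly those $n$-blocks containing no forbidden block $10^p1$ with $p\notin S$. The only nontrivial point is that any such block $w$ occurs in a point of $X_S$: appending $0^\infty$ creates no new occurrence of any $10^p1$, since the appended tail contains no symbol $1$, so $w0^\infty\in X_S$. Next I would describe the shape of such a block: either $w=0^n$, or $w=0^a\,v\,0^b$, where $v$ runs from the first to the last $1$ of $w$, the integers $a,b\ge0$ count the (unconstrained) leading and trailing runs of zeros, and all gaps internal to $v$ lie in $S$. A block $v$ of length $M$ that begins and ends with $1$ and has all internal gaps in $S$ is precisely a block $10^{j_1}\cdots 10^{j_{k-1}}$ of length $M-1$ followed by a final $1$, so there are exactly $a_{M-1}$ of them. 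Summing over $a,b,M$ with $a+b+M=n$ then yields, writing $c_n=|\Bl_n(X_S)|$, the exact identity
\[
c_n=1+\sum_{m=0}^{n-1}(n-m)\,a_m.
\]
In particular $c_n\ge a_n$ (the blocks counted by $a_n$ are themselves in $\Bl_n(X_S)$) and $c_n\le 1+n\sum_{m=0}^{n-1}a_m$.

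It remains to compute $\limsup_n a_n^{1/n}$. For real $x\ge0$ put $P(x)=\sum_{j\in S}x^{j+1}$. Expanding each $P(x)^k$ and collecting terms by total degree gives, for every $x\ge0$, the identity between nonnegative series $\sum_{m\ge0}a_mx^m=\sum_{k\ge0}P(x)^k$ (both sides possibly infinite). Because $P$ has nonnegative coefficients and $P(0)=0$, on the interval $[0,R)$ where the series converges ($R=\infty$ for finite $S$, and $R=1$ with $P(x)\to\infty$ as $x\to1^-$ for infinite $S$) the function $P$ increases continuously from $0$ to $+\infty$; hence there is a unique $\rho\in(0,R)$ with $P(\rho)=1$. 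For $0\le x<\rho$ the right-hand side is a convergent geometric series of ratio $P(x)<1$, so $\sum a_mx^m<\infty$, whereas at $x=\rho$ it equals $\sum_{k\ge0}1=\infty$. By the Cauchy--Hadamard formula the radius of convergence of $\sum a_mx^m$ is therefore exactly $\rho$, so $\limsup_n a_n^{1/n}=1/\rho$. Writing $\lambda=1/\rho$, the relation $P(\rho)=1$ reads $\sum_{j\in S}\lambda^{-j-1}=1$, and the monotonicity just used shows this is its unique positive solution, so $\lambda$ is exactly the number in the statement.

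Finally I would assemble the estimates. Since $h(X_S)=\lim_n\frac1n\log c_n$ exists by definition, it equals both its $\limsup$ and its $\liminf$. From $c_n\ge a_n$ and $\limsup_n a_n^{1/n}=\lambda$ we get $h(X_S)\ge\log\lambda$. For the reverse inequality, fix $\eps>0$; as $\limsup_m a_m^{1/m}=\lambda$ there is a constant $C$ with $a_m\le C(\lambda+\eps)^m$ for all $m$, and, using $\lambda\ge1$ (which holds since $P(1)=\sum_{j\in S}1\ge1$ forces $\rho\le1$), the bound $c_n\le 1+n\sum_{m<n}a_m$ gives $\frac1n\log c_n\le\log(\lambda+\eps)+o(1)$; letting $\eps\to0$ yields $h(X_S)\le\log\lambda$. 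I expect the main obstacle to be the bookkeeping in the counting step --- correctly accounting for the unconstrained leading and trailing runs of zeros and for the degenerate block $0^n$ --- together with making the growth-rate computation uniform over both finite and infinite $S$. The device that keeps everything elementary is to exploit that $h(X_S)$ is already a genuine limit, so that only $\limsup_n a_n^{1/n}$ is needed and no converse Fekete-type argument for the sequence $\{a_n\}$ is required.
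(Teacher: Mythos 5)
Your proof is correct, but there is nothing in the paper to measure it against: the paper does not prove this theorem at all. It appears only as background, recalled verbatim from \cite[Exercise 4.3.7]{LM95} ``for the sake of illustration'' before the authors develop their own, entirely different route (subordinate shifts) to realizing arbitrary entropies. So your proposal supplies a complete argument where the paper supplies only a citation. Comparing your route with the one that citation suggests: the Lind--Marcus exercise is most naturally solved by presenting $X_S$, for finite $S$, as the label shift of a graph with one central vertex and a loop of length $j+1$ for each $j\in S$, so that $h(X_S)$ is the logarithm of the Perron eigenvalue of the adjacency matrix, whose characteristic equation is exactly $\sum_{j\in S}x^{-j-1}=1$; infinite $S$ then requires a separate approximation argument. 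Your argument instead counts blocks directly: the extension $w0^\infty$ correctly identifies $\Bl_n(X_S)$ with the $n$-blocks avoiding $\F_S$; the exact identity $|\Bl_n(X_S)|=1+\sum_{m<n}(n-m)a_m$, with $a_m$ counting compositions of $m$ into parts from $\{j+1:j\in S\}$, checks out (e.g., for $S=\N\cup\{0\}$ it returns $2^n$); and Cauchy--Hadamard applied to $\sum_m a_m x^m=(1-P(x))^{-1}$ pins $\limsup_m a_m^{1/m}$ at $1/\rho$. This is elementary, avoids Perron--Frobenius theory altogether, and --- its main advantage --- treats finite and infinite $S$ uniformly. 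Two small points deserve an explicit sentence in a final write-up: first, for infinite $S$ the uniqueness of the positive root needs the remark that no $x\le 1$ can solve the equation, since then all infinitely many terms $x^{-j-1}$ are at least $1$ and the sum diverges, so every solution lands in the region $(0,R)$ where your monotonicity argument applies; second, since $a_n=0$ can occur (e.g., $S=\{1\}$ and $n$ odd), the inference from $c_n\ge a_n$ to $h(X_S)\ge\log\lambda$ should be phrased by comparing along a subsequence realizing $\limsup_n a_n^{1/n}$, which is what your appeal to the existence of the limit defining $h(X_S)$ implicitly does.
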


\begin{theorem}
For every $t\in[0,1]$ there is a set $S\subset\N\cup\{0\}$ ($S$ depends on $t$) such that $h(X_S)=t$.
\end{theorem}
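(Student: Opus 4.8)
The plan is to invoke the preceding theorem, which turns this into a question about infinite series. By that result $h(X_S)=\log\lambda$, where $\lambda$ is the unique positive solution of $\sum_{j\in S}x^{-j-1}=1$, so (recalling that $\log$ is to base $2$, whence $\log 2^t=t$) it suffices to produce, for a given $t\in[0,1]$, a set $S$ for which this $\lambda$ equals $2^t$. When $t=0$ I would simply take $S=\{0\}$: the defining equation becomes $x^{-1}=1$, its solution is $\lambda=1$, and the entropy is $\log 1=0$. From now on assume $t\in(0,1]$ and set $\beta=2^{-t}$, so that $\beta\in[1/2,1)$ and $2^t=1/\beta$. Substituting $x=1/\beta$ turns $\sum_{j\in S}x^{-j-1}=1$ into $\sum_{j\in S}\beta^{j+1}=1$, and writing $T=\{j+1:j\in S\}\subset\N$ it is enough to find such a $T$ with $\sum_{k\in T}\beta^k=1$.

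To construct $T$ I would run a greedy algorithm. Put $r_0=1$ and, for $k=1,2,3,\ldots$ in turn, place $k$ in $T$ and subtract $\beta^k$ from the running remainder whenever $\beta^k\le r_{k-1}$, and otherwise leave the remainder untouched; formally $r_k=r_{k-1}-\beta^k$ if $k\in T$ and $r_k=r_{k-1}$ if $k\notin T$. By construction $\sum_{k\in T,\,k\le n}\beta^k=r_0-r_n=1-r_n$, so the desired identity $\sum_{k\in T}\beta^k=1$ follows as soon as I show that $r_n\to 0$.

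The heart of the argument, and the one place the hypothesis $\beta\ge 1/2$ is used, is the invariant $0\le r_k\le\beta^{k+1}/(1-\beta)$, which I would prove by induction on $k$. The base case $r_0=1\le\beta/(1-\beta)$ is precisely the inequality $\beta\ge 1/2$. For the inductive step, if the greedy subtracts at step $k$ then $r_k=r_{k-1}-\beta^k\ge 0$ and $r_k\le\beta^k/(1-\beta)-\beta^k=\beta^{k+1}/(1-\beta)$; if it does not subtract then $r_k=r_{k-1}<\beta^k\le\beta^{k+1}/(1-\beta)$, the last inequality being once more $\beta/(1-\beta)\ge 1$. Since $0\le\beta<1$, the upper bound $\beta^{k+1}/(1-\beta)$ tends to $0$, which forces $r_k\to 0$ and hence $\sum_{k\in T}\beta^k=1$.

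Setting $S=\{k-1:k\in T\}$ then gives $\sum_{j\in S}(2^t)^{-j-1}=\sum_{k\in T}\beta^k=1$, so $2^t$ is a positive solution of the defining equation; as $\sum_{j\in S}x^{-j-1}$ is strictly decreasing in $x$ wherever it is finite, this solution is unique and must coincide with the $\lambda$ of the previous theorem, whence $h(X_S)=\log 2^t=t$. I expect the only genuine obstacle to be the convergence of the greedy series, that is, the invariant above; the rest is bookkeeping. It is also worth noting that the extreme case $t=1$ gives $\beta=1/2$, $T=\N$ and $S=\N\cup\{0\}$, i.e.\ the full binary shift of entropy $1$, which serves as a reassuring consistency check rather than a separate case.
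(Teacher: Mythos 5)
Your proof is correct. The reduction via the preceding theorem, the greedy construction of $T$, and the invariant $0\le r_k\le\beta^{k+1}/(1-\beta)$ (whose base case and whose non-subtracting step are exactly the two places where $\beta\ge 1/2$ enters) do establish $\sum_{k\in T}\beta^k=1$, and your treatment of the endpoints $t=0$ and $t=1$ is fine. You should know, however, that the paper itself contains no proof of this statement: both it and the formula $h(X_S)=\log\lambda$ that you rely on are merely recalled from \cite[Exercise 4.3.7]{LM95}, precisely to illustrate the point that computing entropy ordinarily requires nontrivial machinery. So your argument is not parallel to anything in the paper; it is, rather, the natural solution of the cited exercise \emph{conditional on} the $S$-gap entropy formula, which is itself a result the paper pointedly does not prove. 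The paper's own route to realizing every entropy in $[0,1]$ is genuinely different and self-contained: Theorem~\ref{thm:binary-case} builds a binary subordinate shift from a point $x$ assembled out of blocks whose density of $1$'s tracks decimal approximations of $t$, and computes its entropy using only Lemma~\ref{lem:rational-case} (the periodic case) and elementary counting. What your approach buys is brevity and an explicit $S$-gap realization of each entropy value, at the cost of resting on an unproven (here) formula whose standard proofs use Perron--Frobenius-type arguments; what the paper's approach buys is a proof from first principles, which is the stated purpose of the article.
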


\section{Subordinate shifts.}

The goal of this section is to introduce a family of shift spaces with easily calculable entropy; a family rich enough to contain a shift space with every possible nonnegative entropy. For the remainder of the paper, we fix $\Al=\{0,1,\ldots, r-1 \}$.

\begin{definition}[Subordinate shift]
We say that a block $w=w_1\ldots w_k\in\Al^*$ \emph{dominates} a block $v=v_1\ldots v_k\in\Al^*$ if $v_i\le w_i$ for $i=1,\ldots,k$. In an analogous way we define when one sequence from $\Al^\N$ dominates another.

A \emph{subordinate of $\mathcal{L}\subset \Al^*$} is the set $\mathcal{L}^\le$ of all blocks over $\Al$ that are dominated by some block in $\mathcal{L}$. Observe that if $\mathcal{L}$ is factorial and prolongable, then the same holds for $\mathcal{L}^\le$. In particular, given a point $x\in\Al^\N$, we may define a \emph{subordinate shift of $x$}, denoted by $X^{\le x}$, as a shift space given by the language $\Bl^\le_x$, where $\Bl_x$ is the language of blocks occurring in $x$.
\end{definition}

\begin{example}
All binary blocks of length $3$ are dominated by $111$. The blocks $0000$, $0001$, $0100$, and $0101$ are the only blocks dominated by $0101$.
\end{example}

Subordinate shifts are \emph{hereditary} (this is a notion introduced in \cite{KerrLi} and examined in \cite{Kwietniak}).
It can be shown that a hereditary shift is subordinate if and only if it is irreducible in the sense of \cite[Definition 1.3.6]{LM95}.
It turns out that a shift space that has been recently extensively studied is an example of a subordinate shift.
\begin{example}
Recall that a positive integer $n$ is \emph{square-free} if there is no prime number $p$ such that $p^2$ divides $n$. Let $\eta$ be a point in $\{0,1\}^\N$
given by
\[
\eta_n=\begin{cases}
1&\text{ if $n$ is square-free,}\\
0& \text{ otherwise.}
\end{cases}
\]
In other words, $\eta_n=(\mu(n))^2$, where $\mu\colon\N\to\N$ is the famous M\"{o}bius function.
It can be shown that
$S=X^{\le \eta}$ is the \emph{square-free flow}; that is a shift space, whose structure is strongly tied to the statistical properties
of square-free numbers. For more details see \cite{Peckner,Sarnak}. The study of the square-free flow has been recently extended to the more general
context of $\mathcal{B}$-free integers; that is to say integers with no factor in a given
family $\mathcal{B}$ of pairwise relatively prime integers, the sum of whose reciprocals is
finite, see \cite{B-free-dynamical,B-free-measures}.
\end{example}

We aim to show that if given $t\in [0,1]$, then we are able to choose a point $x(t)$ from the full binary shift such that
$h(X^{\le x(t)})=t$. First we tackle rational entropies.

\begin{lemma}\label{lem:rational-case}
If $w\in\{0,1\}^*$ is a block of length $q$ with $p$ occurrences of the symbol $1$ and $x=w^\infty$, then $h(X^{\le x})=p/q$.
\end{lemma}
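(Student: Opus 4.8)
The plan is to compute $h(X^{\le x})$ directly from the definition by estimating $|\Bl_n(X^{\le x})|$ along a convenient subsequence of lengths. First I would unwind the definitions: the language of $X^{\le x}$ is $\Bl_x^\le$, so $\Bl_n(X^{\le x})$ consists of exactly those length-$n$ binary blocks that are dominated by some length-$n$ block occurring in $x=w^\infty$. The blocks occurring in $w^\infty$ are precisely its factors, and since $w^\infty$ has period $q$, there are at most $q$ distinct factors of any fixed length $n$ (one for each starting phase modulo $q$). The crucial elementary observation is that, over the binary alphabet, a block $v$ is dominated by a block $u$ of the same length if and only if $v$ carries a $1$ only in positions where $u$ does; hence a fixed block $u$ with $k$ occurrences of the symbol $1$ dominates exactly $2^k$ blocks.

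Next I would specialize to lengths that are multiples of the period, that is $n=aq$ with $a\in\N$. The reason is that any window of length $aq$ in the $q$-periodic sequence $w^\infty$ spans exactly $a$ full periods, so every length-$aq$ factor of $w^\infty$ contains exactly $ap$ occurrences of $1$, with no error term depending on the phase. Writing $\Bl_{aq}(X^{\le x})$ as the union, over the at most $q$ distinct factors $u$ of length $aq$, of the sets of blocks dominated by $u$, I obtain a lower bound from a single factor (say $u=w^a$, which dominates $2^{ap}$ distinct blocks, all lying in the language) and an upper bound from the union bound, giving
\[
2^{ap}\le |\Bl_{aq}(X^{\le x})|\le q\cdot 2^{ap}.
\]
Taking logarithms and dividing by $aq$ yields
\[
\frac{p}{q}\le \frac{1}{aq}\log|\Bl_{aq}(X^{\le x})|\le \frac{p}{q}+\frac{\log q}{aq},
\]
and the right-hand side tends to $p/q$ as $a\to\infty$.

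Finally, since the entropy is defined as the genuine limit $h(X^{\le x})=\lim_{n\to\infty}\frac1n\log|\Bl_n(X^{\le x})|$, which exists by Fekete's Lemma, its value coincides with the limit computed along the subsequence $n=aq$; by the squeeze above this limit is $p/q$, completing the proof.

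I would expect no single deep obstacle here, as the argument is genuinely elementary. The point that requires the most care is the bookkeeping of the counting: one must avoid silently assuming that distinct factors dominate \emph{disjoint} sets of blocks, which need not hold. This is why I rely on a crude union bound for the upper estimate, which is harmless because the union of at most $q$ sets is only a factor $q$ larger and so contributes only the vanishing term $(\log q)/(aq)$, together with a single factor for the lower estimate. The other mild subtlety worth stating explicitly is why length-$aq$ windows contain exactly $ap$ ones regardless of phase, since it is precisely this fact that eliminates the error term and makes the squeeze exact along this subsequence.
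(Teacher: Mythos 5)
Your proof is correct and follows essentially the same route as the paper: both bound $|\Bl_{qj}(X^{\le x})|$ between $2^{pj}$ (from a single periodic block) and $q\cdot 2^{pj}$ (at most $q$ factors by periodicity, each dominating exactly $2^{pj}$ blocks), then pass to the limit along the subsequence of multiples of $q$, which suffices since Fekete's Lemma guarantees the full limit exists. Your explicit justification of why every length-$aq$ window contains exactly $ap$ ones, and your care about the union bound, only make the bookkeeping slightly more thorough than the paper's version.
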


\begin{proof}
Since replacing in $w$ any subset of $1$'s with $0$'s leads to a block dominated by $w$ which is in $\Bl(X^{\le x})$, we know that there are at least $2^p$ blocks in $\Bl_q(X^{\le x})$. It follows that $h(X^{\le x})\geq p/q$.

On the other hand, the periodicity of $x=w^\infty$ implies that for each $j\in\N$ there are at most $q$ different blocks of length $qj$ in $\Bl_{x}$. Each such block dominates exactly $2^{pj}$ blocks from $\{0,1\}^*$. Therefore there are at most $q\cdot 2^{pj}$ blocks in $\Bl_{qj}(X^{\le x})$. Consequently,
\begin{align*}
h(X^{\le x}) & =  \lim_{n\to\infty} \frac{1}{n}\log|\Bl_n(X^{\le x})| = \lim_{j\to\infty} \frac{1}{qj}\log|\Bl_{qj}(X^{\le x})|\\
& \le  \lim_{j\to\infty} \frac{1}{qj}\log (q\cdot 2^{pj})=p/q.\qedhere
\end{align*}
\end{proof}

We now show how to construct a shift space with entropy $\pi/8$. We hope that this example will make the general construction that comes after it much clearer.

\begin{example}
Let $t=\pi/8=0.3926990816\ldots$. We consider the following sequence of rational approximations of $t$ from the above:
\[
0.4,\, 0.4,\, 0.393,\, 0.3927,\,0.3927,\,0.3927,\,0.3926991,\,0.39269909,\,\ldots .
\]
We obtain our $n$-th approximation by rounding up $t$ to the nearest number with no more than $n$ digits after the decimal point. Let $p_n$ be this $n$-th approximation times $10^n$.

We now inductively build a sequence of blocks $\{w^{(n)}\}_{n=1}^\infty$ from $\{0,1\}^*$ such that for every $n\in\N$:
\begin{enumerate}
\item $w^{(n)}$ has length $10^n$,
\item the symbol $1$ appears exactly $p_n$ times in $w^{(n)}$,
\item $w^{(n)}$ is a prefix of $w^{(n+1)}$,
\item $w^{(n+1)}$ is dominated by $(w^{(n)})^{10}$.
\end{enumerate}

We can visualize the construction of $w^{(n+1)}$ as a process with two steps. In the first step we concatenate ten copies of $w^{(n)}$. In the second step we keep the first $p_{n+1}$ occurrences of the symbol $1$ in $w^{(n+1)}$ and replace the rest by $0$'s.

In our example, we first put $w^{(1)}=1111000000$. We want the symbol $1$ to appear $p_2=40$ times in $w^{(2)}$, so we define $w^{(2)}=(w^{(1)})^{10}$ --- there is no need to remove any $1$'s. Then in $w^{(3)}$ we need to see $393$ appearances of $1$, so we define $w^{(3)}$ as a concatenation of nine copies of $w^{(2)}$ and a block $v$ of length $100$ which agrees with $w^{(2)}$ except that the last seven $1$'s appearing in $w^{(2)}$ are replaced by $0$'s in $v$. The construction continues on inductively.

Applying Lemma \ref{lem:growing-words} to the sequence of binary blocks $\{w^{(n)}\}_{n=1}^\infty$,
we obtain a point $x\in\{0,1\}^\N$. The entropy of the subordinate shift $X^{\le x}$ is $t=\pi/8$. The proof of this fact is contained in the general result below.
\end{example}

We are now equipped to tackle the main theorem of this paper.

\begin{theorem}\label{thm:binary-case}
For every $t\in[0,1]$ there is a binary subordinate shift with entropy $t$.
\end{theorem}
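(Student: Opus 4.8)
The plan is to realize every $t\in[0,1]$ as the entropy of a subordinate shift $X^{\le x}$, where $x$ is produced, via Lemma \ref{lem:growing-words}, as the limit of a sequence of blocks $\{w^{(n)}\}_{n=1}^\infty$ built exactly as in the $\pi/8$ example. First I would fix the rational approximations of $t$ from above: let $t_n$ be $t$ rounded up to $n$ decimal places and set $p_n=t_n\cdot 10^n=\lceil t\cdot 10^n\rceil$, so that $t\le t_n<t+10^{-n}$ and hence $t_n\to t$. Before building the blocks I would record the two arithmetic inequalities that make the construction legal: since $t\ge 0$ and the ceiling is monotone, $p_n=\lceil t\cdot 10^n\rceil\le\lceil t\cdot 10^{n+1}\rceil=p_{n+1}$, while $p_{n+1}=\lceil 10\,(t\cdot 10^n)\rceil\le 10\lceil t\cdot 10^n\rceil=10\,p_n$; thus $p_n\le p_{n+1}\le 10\,p_n$ for every $n$.

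Next I would perform the inductive construction. Starting from $w^{(1)}=1^{p_1}0^{10-p_1}$, I would define $w^{(n+1)}$ by taking $(w^{(n)})^{10}$, a block of length $10^{n+1}$ containing $10\,p_n$ ones, and changing all but the first $p_{n+1}$ ones into $0$'s. The inequality $p_{n+1}\le 10\,p_n$ guarantees there are enough ones to keep, and $p_n\le p_{n+1}$ guarantees that every one inside the first copy of $w^{(n)}$ survives, so that $w^{(n)}$ is a prefix of $w^{(n+1)}$. By construction $w^{(n+1)}$ has length $10^{n+1}$, contains exactly $p_{n+1}$ ones, and is dominated by $(w^{(n)})^{10}$; these are precisely properties (1)--(4) of the example. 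Applying Lemma \ref{lem:growing-words} yields a point $x\in\{0,1\}^\N$ having each $w^{(n)}$ as a prefix.

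For the lower bound I would argue as in Lemma \ref{lem:rational-case}: since $w^{(n)}$ occurs in $x$, each of the $2^{p_n}$ blocks obtained by zeroing out an arbitrary subset of its ones lies in $\Bl(X^{\le x})$ and has length $10^n$, so $|\Bl_{10^n}(X^{\le x})|\ge 2^{p_n}$ and hence $\frac{1}{10^n}\log|\Bl_{10^n}(X^{\le x})|\ge t_n$. Because $h(X^{\le x})$ equals the limit of $\frac{1}{m}\log|\Bl_m(X^{\le x})|$, passing to the subsequence $m=10^n$ gives $h(X^{\le x})\ge\lim_n t_n=t$. For the upper bound I would first prove, by induction on $m\ge n$ using property (4) and the fact that domination is preserved under concatenation, that $w^{(m)}$ is dominated by $(w^{(n)})^{10^{m-n}}$; letting $m\to\infty$ then shows that $x$ is dominated by the periodic point $(w^{(n)})^\infty$. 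A short observation shows that if one point is dominated by another then the corresponding subordinate shifts are nested, so $X^{\le x}\subseteq X^{\le (w^{(n)})^\infty}$; by monotonicity of entropy and Lemma \ref{lem:rational-case}, $h(X^{\le x})\le h(X^{\le (w^{(n)})^\infty})=p_n/10^n=t_n$. Letting $n\to\infty$ yields $h(X^{\le x})\le t$, which together with the lower bound gives $h(X^{\le x})=t$.

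The main obstacle I anticipate is the upper bound, and within it the step that turns the \emph{local} domination condition (4) into the \emph{global} statement that $x$ is dominated by $(w^{(n)})^\infty$. Once that domination is in hand, reducing to the already-computed periodic case via Lemma \ref{lem:rational-case} is routine, but I would need to record carefully the easy yet essential fact that if $x$ is dominated by $y$ then $\Bl^\le_x\subseteq\Bl^\le_y$ and hence $X^{\le x}\subseteq X^{\le y}$; this is what licenses the comparison with the periodic approximants. The lower bound, by contrast, needs only the elementary counting from Lemma \ref{lem:rational-case} together with the observation that the full sequence $\frac{1}{m}\log|\Bl_m(X^{\le x})|$ converges, so that its subsequence along $m=10^n$ has the same limit $h(X^{\le x})$.
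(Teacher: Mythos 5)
Your proposal is correct and follows essentially the same route as the paper's own proof: the same rounded-up decimal approximations $p_n=\lceil t\cdot 10^n\rceil$, the same inductive block construction, the lower bound via the $2^{p_n}$ blocks dominated by $w^{(n)}$, and the upper bound by comparing $X^{\le x}$ with the periodic subordinate shifts $X^{\le (w^{(n)})^\infty}$ through Lemma \ref{lem:rational-case}. The only difference is that you spell out details the paper leaves as observations (the inequalities $p_n\le p_{n+1}\le 10\,p_n$ that make the construction legal, the induction showing $x$ is dominated by $(w^{(n)})^\infty$, and the fact that domination of points gives inclusion of subordinate shifts), which is a welcome addition rather than a deviation.
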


\begin{proof}
For every $n\in\N$ let $s_n$ be the rational approximation of $t$ obtained by rounding up the decimal expansion of $t$ to the nearest number with no more than $n$ digits after the decimal point. Let $p_n=s_n\cdot 10^n$.

We now inductively build a sequence of blocks $\{w^{(n)}\}_{n=1}^\infty$ from $\{0,1\}^*$ such that for every $n\in\N$:
\begin{enumerate}
\item $w^{(n)}$ has length $10^n$,
\item the symbol $1$ appears exactly $p_n$ times in $w^{(n)}$,
\item $w^{(n)}$ is a prefix of $w^{(n+1)}$,
\item $w^{(n+1)}$ is dominated by $(w^{(n)})^{10}$.
\end{enumerate}

We start with
\[
w^{(1)}=\underbrace{1\ldots1}_{p_1}\underbrace{0\ldots 0}_{10-p_1}.
\]

Assume now that we have defined $w^{(1)},\ldots,w^{(n)}$ so that the four conditions above are satisfied to the extent to which they apply to $w^{(1)},\ldots,w^{(n)}$. We now concatenate ten copies of $w^{(n)}$. We keep the first $p_{n+1}$ occurrences of the symbol $1$ unaltered in the sequence, replace the rest by $0$, and call the result $w^{(n+1)}$. It is easy to verify that the conditions above now hold to the extent to which they apply to $w^{(1)},\ldots,w^{(n+1)}$, so the inductive construction is complete.

Applying Lemma \ref{lem:growing-words} to the sequence $\{w^{(n)}\}_{n=1}^\infty$ we obtain a point $x\in\{0,1\}^\N$.

Observe that for every $n\in\N$ the point $x$ is dominated by $(w^{(n)})^\infty$. It follows that $X^{\leq x}\subset X^{\le (w^{(n)})^\infty}$, and so $h(X^{\leq x})\leq h(X^{\le (w^{(n)})^\infty})$. By Lemma \ref{lem:rational-case} we have $h(X^{\le (w^{(n)})^\infty})=s_n$, and therefore
\[h(X^{\leq x})\leq\lim_{n\to\infty}h(X^{\le (w^{(n)})^\infty})=\lim_{n\to\infty}s_n=t.\]

On the other hand, observe that for every $n\in\N$ we have $w^{(n)}\in\Bl_{10^n}(X^{\leq x})$, so all $2^{p_n}$ blocks dominated by $w^{(n)}$ are also in $\Bl_{10^n}(X^{\leq x})$. Therefore $\log |\Bl_{10^n}(X^{\leq x})|\ge \log 2^{p_n} = p_n$, and so
\[
t=\lim_{n\to\infty}s_n=\lim_{n\to\infty}\frac{p_n}{10^n}\le \lim_{n\to\infty} \frac{1}{10^n}\log |\Bl_{10^n}(X^{\leq x})|=h(X^{\leq x}),
\]
which completes the proof of $h(X^{\leq x})=t$.
\end{proof}

\begin{exercise}
Prove that the shift space $X^{\leq x}$ constructed in the proof of Theorem \ref{thm:binary-case} is \emph{irreducible} (see \cite[Definition 1.3.6]{LM95}), meaning that given any pair of blocks $u,v\in\Bl(X^{\leq x})$ there is a block $y\in\Bl(X^{\leq x})$ such that $uyv\in\Bl(X^{\leq x})$.
\end{exercise}

It remains to show that the conclusion of Theorem \ref{thm:binary-case} is true for every $t> 1$.
\begin{theorem}
For every $t\in (1,\infty)$ there is a shift space with entropy $t$.
\end{theorem}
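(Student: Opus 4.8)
The plan is to reduce to the already-established binary case (Theorem~\ref{thm:binary-case}) by repeating its construction over a larger alphabet, so that the resulting space is again a subordinate shift. The key observation is that in the binary construction each occurrence of the symbol $1$ contributes $\log 2 = 1$ to the entropy, because a position carrying a $1$ may be lowered to either $0$ or $1$ in a dominated block. If instead I work over the alphabet $\{0,1,\dots,2^m-1\}$ and use only the two symbols $0$ and $2^m-1$, then a position carrying the top symbol $2^m-1$ may be replaced by any of the $2^m$ symbols below it, so each such active position contributes $\log 2^m = m$ to the entropy. This scales the attainable entropy from $[0,1]$ up to $[0,m]$.

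Concretely, given $t\in(1,\infty)$ I would first set $m=\lceil t\rceil$, so that $t/m\in(0,1]$, and then carry out the construction of Theorem~\ref{thm:binary-case} for the target value $t/m$, but with the symbol $1$ everywhere replaced by the top symbol $2^m-1$ of $\{0,1,\dots,2^m-1\}$. Writing $s_n$ for the round-up of $t/m$ to $n$ decimal digits and $p_n=s_n\cdot 10^n$, this produces blocks $w^{(n)}$ of length $10^n$ with exactly $p_n$ occurrences of $2^m-1$ (and otherwise $0$), each a prefix of the next and dominated by $(w^{(n)})^{10}$, to which Lemma~\ref{lem:growing-words} again associates a point $x$.

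The one genuinely new ingredient is the analogue of Lemma~\ref{lem:rational-case}: if $w$ has length $q$ and $p$ occurrences of $2^m-1$ (and otherwise $0$) and $x=w^\infty$, then $h(X^{\le x})=mp/q$. Its proof is word-for-word the same as before, except that each of the $pj$ top symbols in a length-$qj$ factor of $w^\infty$ can be lowered to any of $2^m$ values, so the number of dominated blocks of length $qj$ is $2^{mpj}$ rather than $2^{pj}$; the lower bound $|\Bl_q(X^{\le x})|\ge 2^{mp}$ together with the periodicity upper bound $|\Bl_{qj}(X^{\le x})|\le q\cdot 2^{mpj}$ then pinch the limit to $mp/q$. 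With this in hand the two estimates of Theorem~\ref{thm:binary-case} go through verbatim: domination of $x$ by $(w^{(n)})^\infty$ gives $h(X^{\le x})\le \lim_{n\to\infty} m s_n = m\cdot(t/m)=t$, while the $2^{mp_n}$ blocks dominated by $w^{(n)}$ give $h(X^{\le x})\ge \lim_{n\to\infty} mp_n/10^n = t$.

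I expect the main obstacle to be purely bookkeeping: checking that the scaling factor $m$ enters every count multiplicatively as $2^{mp_n}$ (and not, say, as an additive term), and that the rounding sequence $s_n\to t/m$ survives multiplication by $m$ so that $m s_n\to t$. There is no real difficulty, since the combinatorics is identical to the binary case up to the uniform replacement of the per-position factor $2$ by $2^m$. One could alternatively prove the theorem by taking the product of a full $2^{\lfloor t\rfloor}$-shift with a binary shift of entropy $t-\lfloor t\rfloor$ and invoking additivity of entropy under products, but the route above has the advantage of keeping the example within the class of subordinate shifts.
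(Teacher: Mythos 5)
Your proof is correct, but it takes a genuinely different route from the paper. The paper reduces to Theorem~\ref{thm:binary-case} by a recoding argument: it picks $k\in\N$ and $s\in(0,1)$ with $ks=t$, takes the binary subordinate shift $X$ with $h(X)=s$, and turns each point of $X$ into a point over $\{0,1,\ldots,2^k-1\}$ by reading consecutive $k$-blocks as single symbols; since $|\Bl_n(Y)|=|\Bl_{kn}(X)|$, the entropy is multiplied by $k$, and the binary theorem is used entirely as a black box --- no new construction and no new counting lemma are needed. You instead rerun the whole construction over the alphabet $\{0,1,\ldots,2^m-1\}$ with $m=\lceil t\rceil$, using only the symbols $0$ and $2^m-1$, which requires you to also redo Lemma~\ref{lem:rational-case} in the form $h(X^{\le w^\infty})=mp/q$ (each of the $p$ top symbols per period can be lowered to any of the $2^m$ smaller-or-equal symbols). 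That analogue is true, its proof is indeed verbatim the paper's, and so are the two final estimates; your choice $m=\lceil t\rceil$ correctly places $t/m$ in $(0,1]$, which Theorem~\ref{thm:binary-case} covers. What your longer route buys is a strictly stronger conclusion: your example is again a \emph{subordinate} shift. The paper's recoded shift $Y$ need not be subordinate (or even hereditary), because the integer order on $\{0,1,\ldots,2^k-1\}$ does not correspond to coordinatewise domination of the underlying binary $k$-blocks --- for $k=2$, for instance, $1\le 2$ as symbols, while $01$ is not dominated by $10$. So the paper proves only that \emph{some} shift space has entropy $t$, whereas your argument shows that every $t>1$ is attained within the class of shifts the paper introduces, at the cost of repeating (rather than merely citing) the binary construction.
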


\begin{proof}
Let $t\in (1,\infty)$ be given. Pick $k\in\N$ and $s\in (0,1)$ such that $ks=t$. Use Theorem \ref{thm:binary-case} to obtain a binary subordinate shift $X$ with entropy $s$. Using $X$ we now construct a shift space $Y\subset\{0,1,\ldots,2^k-1\}^\N$.

We start with $Y=\emptyset$. For every point $w\in X$ we perform the following procedure:
\begin{enumerate}
\item express $w$ as a concatenation $w_1w_2\ldots$, where each block $w_n$ has length $k$,
\item for every $w_n$ we determine the number $a_n\in\{0,1,\ldots,2^k-1\}$ such that $w_n$ is the binary notation for $a_n$ (for example, for $k=2$, we get $00\mapsto 0$, $01\mapsto 1$, $10\mapsto 2$, and $11\mapsto 3$),
\item we add the sequence $a_1a_2\ldots$ to $Y$ (note that it is the $a_n$'s that are symbols here, not their digits).
\end{enumerate}
It is elementary to check that $Y$ is a shift space. It suffices to analyze how the language of $Y$ is created from the language of $X$.

Observe that for every $n\in\N$ we have $|\Bl_n(Y)|=|\Bl_{kn}(X)|$. Therefore
\[
h(Y)=\lim_{n\to\infty}\frac{1}{n}\log |\Bl_n(Y)|=k\cdot\lim_{n\to\infty}\frac{1}{kn}\log |\Bl_{kn}(X)|=k\cdot h(X)=t.\qedhere
\]
\end{proof}

\section*{Acknowledgment.}
The authors would like to thank the referees for their thorough and careful work. Preparing this article we asked our students
and colleagues to comment on it. We are grateful to: Jakub Byszewski, Vaughn Climenhaga, Jakub Konieczny, Marcin Lara, Simon Lunn, Martha {\L}{\c{a}}cka, Dariusz Matlak, Samuel Roth, and Maciej Ulas
for their remarks and suggestions.
The research of Dominik Kwietniak was supported by the  National Science Centre (NCN) under grant Maestro 2013/08/A/ST1/00275.
The research of Jian Li  was supported by Scientific Research Fund of Shantou University (YR13001).

\end{document}